\newtheorem{lem}[subsubsection]{Lemma}
\newtheorem{THM}[subsection]{Theorem}
\newtheorem*{Thm}{Theorem}
\theoremstyle{definition}
           \newtheorem{rem}[subsubsection]{Remark}
           \newtheorem*{Dfn}{Definition}
\newcommand{\Cat}{\mathtt{Cat}}
\newcommand{\colim}{\operatorname{colim}}
\newcommand{\Fun}{\operatorname{Fun}}
\newcommand{\Hom}{\mathrm{Hom}} % straight Hom
\newcommand{\id}{\mathrm{id}}
\newcommand{\one}{\mathbf{1}}
\newcommand{\Ob}{\operatorname{Ob}}
\newcommand{\op}{\mathrm{op}}
\newcommand{\Alg}{\mathtt{Alg}}
\newcommand{\cA}{\mathcal{A}}
\newcommand{\cB}{\mathcal{B}}
\newcommand{\cM}{\mathcal{M}}
\begin{document}

\title[]{Enriched Yoneda lemma}
\author{Vladimir Hinich}
\address{Department of Mathematics, University of Haifa,
Mount Carmel, Haifa 3498838,  Israel}
\email{hinich@math.haifa.ac.il}
 
\begin{abstract}We present a version of enriched Yoneda lemma for 
conventional (not $\infty$-) categories. We do not require the base monoidal category
$\cM$ to be closed or symmetric monoidal. In the case $\cM$ has colimits and the monoidal structure in $\cM$ preserves colimits in each argument, we prove that 
the Yoneda embedding $\cA\to P_\cM(\cA)$ is a universal functor from $\cA$ to a category
with colimits, left-tensored over $\cM$.   
\end{abstract}
\maketitle

\section{Introduction}

\subsection{}
The principal source on enriched category theory is the classical Max~Kelly's book ~\cite{K}. The theory is mostly developed under the assumption that the basic monoidal category $\cM$ is symmetric monoidal, and is closed, that is admits an internal Hom --- a functor right adjoint to the tensor product. 

The aim of this note is to present an approach which would make both conditions unnecessary.

Throughout the paper we study categories enriched over an arbitrary  monoidal category 
$\cM$. Note that this means that,
if $\cA$ is enriched over $\cM$, the opposite category $\cA^\op$ is enriched
over the monoidal category $\cM_\op$ having the opposite multiplication. 
Also, since we do not require $\cM$ to be closed, $\cM$ may not be enriched over itself.

Our approach is based on the following observation. Even though categories 
left-tensored over $\cM$ are not necessarily enriched over $\cM$, it  
makes a perfect sense to talk about $\cM$-functors $\cA\to \cB$ 
where $\cA$ is
$\cM$-enriched, and $\cB$ is left-tensored over $\cM$. 
Thus, $\cM$-enriched categories and categories left-tensored over $\cM$ appear in our approach as distinct but interconnected species.

\subsection{}
In this note we present two results in the enriched setting.
The first is construction of the category of enriched presheaves
and the Yoneda lemma. The second result, claiming a universal property of the 
category of enriched presheaves, requires $\cM$ to have colimits, so that the tensor product in $\cM$ preserves colimits in both arguments.

\subsection{}
In this note we adopt the language which allows us not to mention
associativity constraints explicitly. Thus is done as follows.
The small categories are considered  belonging to $(2,1)$-category 
$\Cat$, with functors as 1-morphisms and isomorphisms of functors as 
2-morphisms. Associative algebras in 2-category $\Cat$ 
are precisely monoidal categories, and left modules over these algebras
are left-tensored categories.
 
Similarly, we denote $\Cat^L$  the $(2,1)$-category whose objects are
the categories with colimits, 1-morphisms are  colimit preserving 
functors, and 2-morphisms are isomorphisms of such functors. 

This is a symmetric monoidal $(2,1)$-category, with tensor product 
defined by the formula
\begin{equation}
\Fun(A\otimes B,C)=\{f:A\times B\to C| f\textrm{ preserves colimits in 
both arguments}\}.
\end{equation} 
Associative algebras in $\Cat^L$ are monoidal categories with colimits, 
such that
tensor product preserves colimits in each argument
\footnote{As it is shown in \cite{L.HA}, Chapter 2, there is no 
necessity of keeping explicit track of various coherences even in the 
more general context of quasicategories.}.

\subsection{}
As it was pointed to us by the referee, enriched Yoneda lemma in the 
generality
presented in this note is not a new result. A recent paper \cite{GS} 
contains it (see 
Sections 5,7), as well as many other results, in even more general 
context of monoidal 
bicategories.
The approach of {\sl op. cit} is close to ours. The authors do not have the notion
of $\cM$-functor $\cA\to\cB$ from $\cM$-enriched category $\cA$ to a category $\cB$ 
left-tensored over $\cM$; but they construct the category of $\cM$-presheaves 
$P_\cM(\cA)$ ad hoc using the same formulas.  

We are very grateful to the referee for providing this reference, as well as for indicating that we do not use cocompletness of $\cM$ in Sections 2, 3.

\subsection{} The approach to Yoneda lemma presented in this note is very instrumental
in the theory of enriched infinity categories. We intend to address this in a subsequent publication.

\section{Two types of enrichment}

Let $\cM$ be a monoidal category.
In this section we define $\cM$-categories and categories
left-tensored over $\cM$.

\subsection{$\cM$-enriched categories}
Let $\cM$ be a monoidal category. An $\cM$-enriched category $\cA$
(or just $\cM$-category) has a set of objects, an object $\hom_\cA(x,y)\in\cM$
for each pair of objects (``internal Hom''), identity maps
$\one\to\hom(x,x)$ for each $x$ and associative compositions
$$\hom(y,z)\otimes\hom(x,y)\to\hom(x,z).$$

Let $\cA$ be $\cM$-enriched category. Its opposite $\cA^\op$ is a category
enriched over $\cM_\op$. The latter is the same category as $\cM$, but having the
opposite tensor product structure. The category $\cA^\op$ has the same objects as
$\cA$. Morphisms are defined by the formula
$$
\hom_{\cA^\op}(x^\op,y^\op)=\hom_\cA(y,x),
$$
with the composition defined in the obvious way.

\subsection{Left-tensored categories}

A left-tensored category $\cA$ over $\cM$ is just a left (unital) module 
for the associative algebra $\cM\in\Alg(\Cat)$. Note that unitality is 
not an extra structure, but a property saying that the unit
of $\cM$ acts on $\cA$ as an equivalence.

Right-tensored categories over $\cM$ are defined similarly. They are the same 
as the categories left-tensored over $\cM_\op$.

\begin{rem}
In case $\cM\in\Alg(\Cat^L)$, that is, $\cM$ has colimits and the 
monoidal operation in $\cM$ preserves colimits in each argument, we will 
define left-tensored categories over $\cM$ as left $\cM$-modules over 
the associative algebra $\cM\in\Alg(\Cat^L)$. 
A left-tensored category so defined has colimits, 
and the tensor product preserves colimits in both arguments.
\end{rem}

Left-tensored categories over $\cM$ often give rise to an $\cM$-enriched
structure: we can define $\hom(x,y)$ as an object of $\cM$ representing 
the functor
\begin{equation}\label{eq:internalhom}
m\mapsto\Hom(m\otimes x,y).
\end{equation}

Even if the above functor is not representable, we will use the notation
$\hom(x,y)$ to define the functor~(\ref{eq:internalhom}).

\

Note that left-tensored categories are categories (with extra 
structure).
Enriched categories are not, formally speaking, categories: maps from 
one object to another form an object of $\cM$ rather than a set.

\section{$\cM$-functors}

In this section we present two contexts for the definition of
a category of $\cM$-functors: from one category left-tensored over $\cM$
to another, and from an $\cM$-category to  a left-tensored category
over $\cM$.

\subsection{$\cA$ and $\cB$ are left-tensored}

Given two categories $\cA$ and $\cB$, left-tensored over $\cM$, one defines 
a category $\Fun_\cM(\cA,\cB)$ of $\cM$-functors as follows.

The objects are functors $f:\cA\to\cB$, together with 
a natural equivalence between two compositions in the diagram
\begin{equation}\label{eq:modmap}
\xymatrix{
&{\cM\otimes \cA}\ar[r]\ar[d]^{\id\otimes f} &{\cA}\ar[d]^f \\
&{\cM\otimes\cB}\ar[r]&{\cB}
}, 
\end{equation}
satisfying a compatibility in the diagram
\begin{equation}\label{eq:modmap2}
\xymatrix{
&{\cM\otimes\cM\otimes\cA}\ar@<1ex>[r]\ar@<-1ex>[r]\ar[d]^{\id\otimes\id\otimes f}
&{\cM\otimes \cA}\ar[r]\ar[d]^{\id\otimes f} 
&{\cA}\ar[d]^f \\
&{\cM\otimes\cM\otimes\cA}\ar@<1ex>[r]\ar@<-1ex>[r]
&{\cM\otimes\cB}\ar[r]&{\cB}
}  
\end{equation}
The morphisms in $\Fun_\cM(\cA,\cB)$ are morphisms of functors 
compatible with natural equivalences (\ref{eq:modmap}). Note that we 
have no unit condition on $f:\cA\to\cB$ as unitality of left-tensor 
categories is a property rather than extra data
\footnote{saying that the functor $\one\otimes:\cA\to\cA$ is an equivalence.}, so the ``unit constraints'' $\one\otimes x\to x$ are uniquely reconstructed and automatically preserved by $\cM$-functors.
 
In case $\cM\in\Alg(\Cat^L)$ and $\cA,\cB$ are left-tensored, we define 
$\Fun^L_\cM(\cA,\cB)$ as the category of colimit-preserving functors $f:
\cA\to\cB$, with a natural equivalence (\ref{eq:modmap}) satisfying 
compatibility (\ref{eq:modmap2}).

\subsection{$\cA$ is $\cM$-category and $\cB$ is left-tensored}

Let $\cA$ be $\cM$-enriched category and $\cB$ be left-tensored over $\cM$. We
will define $\Fun_\cM(\cA,\cB)$, the category of $\cM$-functors from $\cA$ to
$\cB$, as follows.

An $\cM$-functor $f:\cA\to\cB$ is given by a map 
$f:\Ob(\cA)\to\Ob(\cB)$, together with a compatible collection
of maps
\begin{equation}\label{eq:enr-to-mod}
\hom_\cA(x,y)\otimes f(x)\to f(y),
\end{equation}
given for each pair $x,y\in\Ob(\cA)$. 
The compatibility means that, given three objects $x,y,z\in\cA$,
one has a commutative diagram
\begin{equation}\label{eq:enr-to-mod2}
\xymatrix{
&{\hom_\cA(y,z)\otimes\hom_\cA(x,y)\otimes f(x)}\ar[r]\ar[d]
&{\hom_\cA(y,z)\otimes f(y)}\ar[d]\\ 
&{\hom_\cA(x,z)\otimes f(x)}\ar[r]
&{f(z).}
}
\end{equation}

Note that here, once more, we need no special unitality condition:
the map (\ref{eq:enr-to-mod}) applied to $x=y$ , composed with the 
unit $\one\to\hom_\cA(x,x)$, yields automatically the ``unit 
constraint'' $\one\otimes f(x)\to f(x)$: this follows from 
(\ref{eq:enr-to-mod2}) and the unitality of $\cB$.

$\cM$-functors from $\cA$ to $\cB$ form a category: a map from $f$ to
 $g$ is given by a compatible collection of arrows $f(x)\to g(x)$ in 
$\cB$ for any $x\in\Ob(\cA)$.

\subsection{$\cM$-presheaves}

The category $\cM$ is both left and right-tensored over $\cM$.
Given an $\cM$-category $\cA$, the opposite category $\cA^\op$ is
enriched over $\cM_\op$, so one has a category of $\cM_\op$-functors
$\Fun_{\cM_\op}(\cA^\op,\cM)$. We will call it {\sl the category of 
$\cM$-presheaves on $\cA$} and we will denote it $P_\cM(\cA)$.

\subsubsection{}
Let us describe explicitly what is an $\cM$-presheaf on $\cA$.
This is a map $f:\Ob(\cA)\to\Ob(\cM)$, together with a compatible
collection of maps
\begin{equation}\label{eq:Mpre}
f(y)\otimes\hom_\cA(x,y)\to f(x).
\end{equation}

\subsubsection{}
Let us show that $P_\cM(\cA)$ is left-tensored over $\cM$. Given 
$f\in P_\cM(\cA)=\Fun_{\cM_\op}(\cA^\op,\cM)$ and $m\in\cM$, the
 presheaf $m\otimes f$ is defined as follows.

It carries an object $x\in\cA^\op$ to $m\otimes f(x)$.
For a pair $x,y\in\Ob(\cA)$ the map
\begin{equation}
(m\otimes f(y))\otimes\hom_\cA(x,y)\to m\otimes f(x).
\end{equation}
is obtained from~(\ref{eq:Mpre}) by tensoring with $m$ on the left.

\subsubsection{}

The Yoneda embedding $Y:\cA\to P_\cM(\cA)$ is an $\cM$-functor
defined as follows.

For $z\in\cA$ the presheaf $Y(z)$ carries $x\in\cA$ to $\hom_\cA(x,z)\in\cM.$ 
The map (\ref{eq:Mpre})
\begin{equation}\label{eq:Y}
Y(z)(y)\otimes\hom_\cA(x,y)\to Y(z)(x)
\end{equation}
is defined by the composition 
$$\hom_\cA(y,z)\otimes\hom_\cA(x,y)\to\hom_\cA(x,z).$$

\begin{lem}\label{lem:Yoneda1}
The functor $\hom_{P_\cM(\cA)}(Y(x),F)$
is represented by $F(x)\in\cM$.
\end{lem}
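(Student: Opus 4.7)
The plan is to unwind the definition and construct the bijection directly. By the convention introduced after display (\ref{eq:internalhom}), asserting that $\hom_{P_\cM(\cA)}(Y(x), F)$ is represented by $F(x)$ amounts to exhibiting, naturally in $m\in\cM$, a bijection
\begin{equation*}
\Phi_m : \Hom_\cM(m, F(x)) \longrightarrow \Hom_{P_\cM(\cA)}(m\otimes Y(x), F).
\end{equation*}
An element of the right-hand side is, by the definition of morphisms of $\cM_\op$-functors $\cA^\op\to\cM$, a family of arrows $\alpha_y : m\otimes\hom_\cA(y,x)\to F(y)$ in $\cM$, one for each $y\in\Ob(\cA)$, compatible with the presheaf structure in the sense of (\ref{eq:Mpre}).

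First I would define $\Phi_m$. Given $\phi : m\to F(x)$ in $\cM$, set
\begin{equation*}
\alpha_y = \bigl(m\otimes\hom_\cA(y,x)\xrightarrow{\phi\otimes\id} F(x)\otimes\hom_\cA(y,x)\xrightarrow{\ \rho\ } F(y)\bigr),
\end{equation*}
where $\rho$ is the $\cM$-presheaf action on $F$. Compatibility of this family with the presheaf structure on $m\otimes Y(x)$ follows from the commutative diagram (\ref{eq:enr-to-mod2}) expressing associativity of the action of $\cA$ on $F$, together with functoriality of the tensor product on the factor $m$.

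Next I would define the inverse $\Psi_m$. Given a family $\alpha = (\alpha_y)$, set
\begin{equation*}
\Psi_m(\alpha) = \bigl(m \cong m\otimes\one \xrightarrow{\id\otimes u_x} m\otimes\hom_\cA(x,x) \xrightarrow{\alpha_x} F(x)\bigr),
\end{equation*}
where $u_x:\one\to\hom_\cA(x,x)$ is the identity. Checking $\Psi_m\circ\Phi_m=\id$ is a direct application of the unitality axiom relating $u_x$ with the action of $\hom_\cA(x,x)$ on $F(x)$ (this axiom holds automatically, as explained in the paragraph following (\ref{eq:enr-to-mod2})).

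The main computational step, and the one I expect to require the most care, is the reverse identity $\Phi_m\circ\Psi_m=\id$: starting with a family $(\alpha_y)$, producing $\phi=\Psi_m(\alpha)$, and then recomputing $\Phi_m(\phi)_y$, one must recover $\alpha_y$. The point is to apply the compatibility square (\ref{eq:enr-to-mod2}) to the triple $(y,x,x)\in\cA$ for the morphism $\alpha$, precomposed with $\id_m\otimes\id_{\hom_\cA(y,x)}\otimes u_x$; the top-right route of the resulting diagram gives $\Phi_m(\phi)_y$, while the bottom-left route, after using that the composition $\hom_\cA(y,x)\otimes\one\to\hom_\cA(y,x)\otimes\hom_\cA(x,x)\to\hom_\cA(y,x)$ is the identity (by unitality in $\cA$), reduces to $\alpha_y$. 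Naturality of $\Phi_m$ in $m$ is then tautological, since everything is obtained by tensoring with $m$ on the left.
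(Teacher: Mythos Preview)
Your approach is exactly the paper's: exhibit the universal arrow $F(x)\otimes Y(x)\to F$ coming from the presheaf structure of $F$, and recover $\tilde\alpha:m\to F(x)$ from any $\alpha:m\otimes Y(x)\to F$ by inserting the unit $u_x$; you in fact go further than the paper, which states $\tilde\alpha$ but omits the verification of the two round-trip identities you sketch.

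One bookkeeping slip in your final paragraph: the square you need is the \emph{naturality} of $\alpha$ as a morphism of presheaves (indexed by a pair of objects, not a triple), not diagram~(\ref{eq:enr-to-mod2}); with the paper's composition convention $\hom(b,c)\otimes\hom(a,b)\to\hom(a,c)$ its top-left corner is $m\otimes\hom_\cA(x,x)\otimes\hom_\cA(y,x)$, so the correct precomposition is $\id_m\otimes u_x\otimes\id_{\hom_\cA(y,x)}$ and the unitality you invoke should read $\one\otimes\hom_\cA(y,x)\to\hom_\cA(x,x)\otimes\hom_\cA(y,x)\to\hom_\cA(y,x)$.
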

\begin{proof}
The map of presheaves 
\begin{equation}\label{eq:FYF}
F(x)\otimes Y(x)\to F
\end{equation}
is given by the collection of maps
$F(x)\otimes\hom(z,x)\to F(z)$ which is a part of data for $F$.

We have to verify that (\ref{eq:FYF}) is universal. That is, any map
$\alpha:m\otimes Y(x)\to F$ in $P_\cM(\cA)$ comes from a unique map 
$\tilde\alpha:m\to F(x)$. The map $\tilde\alpha$ is the composition
$$ m\to m\otimes\hom_\cA(x,x)\to F(x).$$

\end{proof}

\subsubsection{Yoneda lemma}

Lemma~\ref{lem:Yoneda1} is a version of Yoneda lemma. 
Theorem~\ref{thm:yoneda} below saying Yoneda embedding is fully faithful is almost an immediate corollary.

\begin{Dfn}
An $\cM$-functor $f:\cA\to\cB$ from an $\cM$-category 
to an enriched category is {\sl fully faithful} if for 
any $x,y\in\cA$ the functor $\hom_\cB(f(x),f(y))$ defined by the 
formula~(\ref{eq:internalhom}), is represented by $\hom_\cA(x,y)$.
\end{Dfn}

\begin{Thm}\label{thm:yoneda}
The Yoneda embedding $Y:\cA\to P_\cM(\cA)$ is fully faithful for any small 
$\cM$-category $\cA$. 
\end{Thm}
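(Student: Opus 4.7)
The plan is to deduce the theorem directly from Lemma~\ref{lem:Yoneda1}. Fixing objects $x, y \in \cA$, the theorem asks us to verify that the functor
$$m \mapsto \Hom_{P_\cM(\cA)}(m \otimes Y(x), Y(y))$$
on $\cM$ is represented by $\hom_\cA(x,y)$, in accordance with the definition of fully faithful $\cM$-functor given just before the theorem.

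First, I would instantiate Lemma~\ref{lem:Yoneda1} with the $\cM$-presheaf $F := Y(y)$; the lemma then asserts that the functor displayed above is represented by the object $F(x) = Y(y)(x)$, with universal arrow the canonical map $F(x) \otimes Y(x) \to F$ assembled from the presheaf structure of $F$. Next, I would unwind the definition of $Y$: by construction, $Y(y)$ is the presheaf whose value at $x$ is $\hom_\cA(x,y)$. Substituting, the representing object is exactly $\hom_\cA(x,y)$, as required.

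I do not expect any genuine obstacle. The one matter worth spelling out, and which is essentially tautological from the constructions in the subsubsection preceding Lemma~\ref{lem:Yoneda1}, is that the universal arrow $\hom_\cA(x,y) \otimes Y(x) \to Y(y)$ produced by the lemma coincides with the structural map of $Y$ as an $\cM$-functor. Both are, after evaluation at an arbitrary $z \in \cA$, the composition morphism $\hom_\cA(x,y) \otimes \hom_\cA(z,x) \to \hom_\cA(z,y)$ of $\cA$. Once this identification is noted, the theorem follows with no further computation.
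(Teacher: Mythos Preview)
Your proposal is correct and matches the paper's proof essentially verbatim: the paper also fixes $x,y\in\cA$, reformulates full faithfulness as a bijection $\Hom_\cM(m,\hom_\cA(x,y))\to\Hom_{P_\cM(\cA)}(m\otimes Y(x),Y(y))$, and observes this is the special case $F=Y(y)$ of Lemma~\ref{lem:Yoneda1}. Your extra remark identifying the universal arrow with the structural map of $Y$ is a welcome clarification but is not needed for the bare statement.
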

\begin{proof}
Let $x,y\in\cA$. 
We have to prove that the canonical map 
$$\hom_\cA(x,y)\otimes Y(x)\to Y(y)$$
induces a bijection
\begin{equation}
\Hom_\cM(m,\hom_\cA(x,y))\to\Hom_{P_\cM(\cA)}(m\otimes Y(x),Y(y)).
\end{equation}
This is a special case of Lemma~\ref{lem:Yoneda1}.
\end{proof}

\section{Universal property of $\cM$-presheaves}

In this section we assume $\cM\in\Alg(\Cat^L)$.

The Yoneda embedding $Y:\cA\to P_\cM(\cA)$ induces, for each left-tensored category $\cB$ over $\cM$, a natural map
\begin{equation}\label{eq:univP}
Res:\Fun^L_\cM(P_\cM(\cA),\cB)\to\Fun_\cM(\cA,\cB).
\end{equation}

In this section we will show that the above map is an equivalence of categories.
In other words, we will prove that $P_\cM(\cA)$ is the universal left-tensored
category over $\cM$ with colimits generated by $\cA$.

\subsection{Weighted colimits}

Let, as usual, $\cA$ be $\cM$-category and $\cB$ be left-tensored over $\cM$.
Given $W\in P_\cM(\cA)$ and $F:\cA\to\cB$, we define the weighted colimit
$Z=\colim_W(F)$ as a object of $\cB$ together with a collection of arrows
$\alpha_x:W(x)\otimes F(x)\to Z$ making the diagrams
\begin{equation}
\xymatrix{
&{W(y)\otimes\hom_\cA(x,y)\otimes F(x)}\ar[d]\ar[r]
&{W(y)\otimes F(y)}\ar[d]^{\alpha_y}\\
&{W(x)\otimes F(x)}\ar[r]^{\alpha_x} &{Z}
}
\end{equation}
commutative for each pair $x,y\in\cA$, and satisfying an obvious universal 
property.
 
\

It is clear from the above definition that weighted colimits are special
kind of colimits, so they always exist.

Weighted colimit is a functor
$$
P_\cM(\cA)\times\Fun_\cM(\cA,\cB)\to \cB
$$
preserving colimits in both arguments.

Weighted colimits are very convenient in presenting presheaves as colimits of representable presheaves. This can be done in a very canonical way:
any presheaf $F\in P_\cM(\cA)$ is the weighted colimit
$$ F=\colim_F(Y),$$
where $Y:\cA\to P_\cM(\cA)$ is the Yoneda embedding.

\begin{THM}\label{thm:universal}
The functor (\ref{eq:univP}) is an equivalence of categories.
\end{THM}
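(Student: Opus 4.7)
The plan is to construct an explicit quasi-inverse $L$ to $\mathrm{Res}$ using the weighted colimit machinery introduced just above the statement. For an $\cM$-functor $F:\cA\to\cB$ set
$$
L(F)(W)\ =\ \colim_W(F)\in\cB,
$$
and extend $L(F)$ to morphisms of presheaves by the universal property of the weighted colimit. I would first check that $L(F)$ lands in $\Fun^L_\cM(P_\cM(\cA),\cB)$. Colimit preservation in the variable $W$ is immediate from the already-recorded fact that the weighted colimit is a functor $P_\cM(\cA)\times\Fun_\cM(\cA,\cB)\to\cB$ preserving colimits in each argument. For $\cM$-linearity, the left tensoring in $P_\cM(\cA)$ is pointwise, $(m\otimes W)(x)=m\otimes W(x)$, and the left tensoring in $\cB$ preserves colimits in each argument (since $\cB$ is a module in $\Alg(\Cat^L)$), so the canonical arrows $m\otimes W(x)\otimes F(x)\to m\otimes L(F)(W)$ assemble into an isomorphism $L(F)(m\otimes W)\simeq m\otimes L(F)(W)$; associativity of this comparison with respect to the diagram~\eqref{eq:modmap2} follows from the corresponding diagram for the action of $\cM$ on $\cB$.

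Next I would verify $\mathrm{Res}\circ L\simeq\id$. Unwinding, this amounts to showing $\colim_{Y(x)}(F)\simeq F(x)$ naturally in $x$. The data of $F$ as an $\cM$-functor are exactly maps $\hom_\cA(z,x)\otimes F(z)\to F(x)$ making the diagram~\eqref{eq:enr-to-mod2} commute; these are precisely the coherence data $\alpha_z:Y(x)(z)\otimes F(z)\to F(x)$ required by the definition of $\colim_{Y(x)}(F)$. Universality of $F(x)$ among such data follows by evaluating any competitor against the identity $\one\to\hom_\cA(x,x)$, exactly as in Lemma~\ref{lem:Yoneda1}; one checks the two constructions are inverse. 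This is the weighted-colimit avatar of Yoneda and, together with the naturality of everything in sight, gives the isomorphism $\mathrm{Res}(L(F))\simeq F$ in $\Fun_\cM(\cA,\cB)$.

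For the other composition, take $G\in\Fun^L_\cM(P_\cM(\cA),\cB)$. Using the density formula $W=\colim_W(Y)$ recorded above the theorem and the fact that $G$ preserves colimits and commutes with the $\cM$-action, one computes
$$
G(W)\ \simeq\ G(\colim_W(Y))\ \simeq\ \colim_W(G\circ Y)\ =\ L(\mathrm{Res}(G))(W).
$$
The middle step is the key calculation: the weighted colimit on the left is built from objects $W(x)\otimes Y(x)$ in $P_\cM(\cA)$, and applying $G$ sends $W(x)\otimes Y(x)$ to $W(x)\otimes G(Y(x))$ by $\cM$-linearity, after which colimit-preservation of $G$ pulls $G$ past the colimit. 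One verifies that the resulting isomorphism is natural in $W$ and compatible with the $\cM$-linear coherence data of $G$.

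The main obstacle is the second paragraph: rigorously identifying the universal object computing $\colim_{Y(x)}(F)$ with $F(x)$, because this is where one must use the identity map $\one\to\hom_\cA(x,x)$ and the full unitality/associativity of the $\cM$-functor structure. Everything else is essentially formal manipulation of the universal property of weighted colimits once this Yoneda-type identification is in hand.
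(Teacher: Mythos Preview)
Your proposal is correct and follows exactly the same approach as the paper: you construct the inverse $L$ (the paper calls it $Ext$) by $L(F)(W)=\colim_W(F)$ and check that $L$ and $\mathrm{Res}$ are mutually inverse. The paper's proof is a single sentence asserting that ``it is easily verified'' that $Ext$ and $Res$ form a pair of equivalences; you have simply written out the verification (colimit preservation and $\cM$-linearity of $L(F)$, the identification $\colim_{Y(x)}(F)\simeq F(x)$ via the unit, and the density computation $G(W)\simeq\colim_W(G\circ Y)$), so your argument is a faithful expansion of what the paper leaves implicit.
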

\begin{proof}
We will construct a functor $Ext$ in the opposite direction. Given 
$F\in\Fun_\cM(\cA,\cB)$, we define $Ext(F)$ by the formula
\begin{equation}
Ext(F)(W)=\colim_W(F).
\end{equation}
It is easily verified that  the functors $Ext$ and $Res$ form
a pair of equivalences.

\end{proof}

\end{document}